\numberwithin{equation}{section}
\theoremstyle{plain}	     
\newtheorem{thm}{Theorem}[section] 
\newtheorem{cor}[thm]{Corollary}
\newtheorem{lem}[thm]{Lemma}
\theoremstyle{definition}
\theoremstyle{remark} 
\newtheorem{rem}[thm]{Remark}
\newcommand{\disp}{\displaystyle}
\newcommand{\vp}{\varphi}
\begin{document}

\title{Applications of generalized trigonometric functions
with two parameters II
\footnote{The work was supported by JSPS KAKENHI Grant Number 17K05336.}}
\author{Shingo Takeuchi \\
Department of Mathematical Sciences\\
Shibaura Institute of Technology
\thanks{307 Fukasaku, Minuma-ku,
Saitama-shi, Saitama 337-8570, Japan. \endgraf
{\it E-mail address\/}: shingo@shibaura-it.ac.jp \endgraf
{\it 2010 Mathematics Subject Classification.} 
33E05, 34L40}}

\date{}

\maketitle

\begin{abstract}
Generalized trigonometric functions (GTFs) are simple generalization
of the classical trigonometric functions. GTFs are deeply related to 
the $p$-Laplacian, which is known as a typical nonlinear differential operator.
Compared to GTFs with one parameter, there are few applications of 
GTFs with two parameters to differential equations.
We will apply GTFs with two parameters 
to studies on the inviscid primitive equations of oceanic and atmospheric dynamics,
new formulas of Gaussian hypergeometric functions, and 
the $L^q$-Lyapunov inequality for 
the one-dimensional $p$-Laplacian.  
\end{abstract}

\textbf{Keywords:} 
Generalized trigonometric functions,
$p$-Laplacian,
Inviscid primitive equations,
Gaussian hypergeometric functions,
Lyapunov inequality

%%%%%%%%%%%%%%%%%%%%%%%%%%%%%%%%%%%%%%%%%%%%%%%%%%

\section{Introduction}

Let $p,\ q \in (1,\infty)$ be any constants. 
We define $\sin_{p,q}{x}$ by
the inverse function of 
\[
\sin_{p,q}^{-1}{x}:=\int_0^x \frac{dt}{(1-t^q)^{1/p}}
=\frac{1}{q}B_{x^q}\left(\frac{1}{q},\frac{1}{p^*}\right), \quad 0 \leq x \leq 1,
\]
and $\pi_{p,q}$ by
\begin{equation}
\label{eq:pipq}
\pi_{p,q}:=2\sin_{p,q}^{-1}{1}=2\int_0^1 \frac{dt}{(1-t^q)^{1/p}}
=\frac2q B\left(\frac{1}{q},\frac{1}{p^*}\right),
\end{equation}
where $p^*:=p/(p-1)$. Here, $B_x(a,b)$ denotes the incomplete
beta function
\[ B_x(a,b):=\int_0^x t^{a-1}(1-t)^{b-1}\,dt, \quad 0 \leq x \leq 1,\ a,\ b>0,\]
and $B(a,b)$ denotes the beta function 
$$B(a,b):=B_1(a,b), \quad a,\ b>0.$$

Clearly, the function $\sin_{p,q}{x}$ is increasing in $[0,\pi_{p,q}/2]$ onto $[0,1]$.
Since $\sin_{p,q}{x} \in C^1(0,\pi_{p,q}/2)$,
we can define $\cos_{p,q}{x}$ by $\cos_{p,q}{x}:=(d/dx)(\sin_{p,q}{x})$.
In case $p=q$, we denote $\sin_{p,p}{x}$, $\cos_{p,p}{x}$ and $\pi_{p,p}$
briefly by $\sin_p{x}$, $\cos_p{x}$ and $\pi_p$, respectively.
It is obvious that $\sin_{2}{x},\ \cos_{2}{x}$ 
and $\pi_{2}$ are reduced to the ordinary $\sin{x},\ \cos{x}$ and $\pi$,
respectively. This is the reason why these functions and the constant are called
\textit{generalized trigonometric functions} (GTFs) with parameter $(p,q)$
and the \textit{generalized $\pi$}. 
As the trigonometric functions satisfy $\cos^2{x}+\sin^2{x}=1$,
so it is shown that for $x \in [0,\pi_{p,q}/2]$
\begin{equation}
\label{eq:pr}
\cos_{p,q}^p{x}+\sin_{p,q}^q{x}=1.
\end{equation}
In addition, one can see that
$u=\sin_{p,q}{x}$ satisfies the nonlinear differential equation with $p$-Laplacian:
\begin{equation}
\label{eq:pep}
-(|u'|^{p-2}u')'=\frac{q}{p^*}|u|^{q-2}u,
\end{equation}
which is reduced to the equation $-u''=u$ of simple harmonic motion
for $u=\sin{x}$ in case $p=q=2$.

GTFs with one parameter are often used to study problems of
existence, bifurcation and oscillation of solutions of differential equations
related to the $p$-Laplacian
(see \cite{KT2019} and the references given there).
However, there are few applications of GTFs with two parameters
to differential equations, and we can refer only to
Dr\'{a}bek and Man\'{a}sevich \cite{DM1999} and Kobayashi and Takeuchi \cite{KT2019}, 
though GTFs are simple generalization of the classical trigonometric functions.

The present paper is the sequel to \cite{KT2019}
and we will give applications of GTFs with two parameters.

In Section \ref{sec:nbvp}, we will investigate the profiles of positive solutions of 
the following nonlocal boundary value problem.
\begin{equation}
\label{eq:nbvp}
\begin{cases}
\vp'-(\vp')^2+\vp\vp''+\dfrac{2}{H} \disp \int_0^H (\vp'(t))^2\,dt=0,\\
\vp(0)=\vp(H)=0.
\end{cases}
\end{equation} 
This problem was studied in C.\,Cao et al \cite{CINT2015} to investigate the self-similar blowup
for the inviscid primitive equations of oceanic and atmospheric dynamics. 
In \cite[Corollary 1]{KT2019}, it is shown that all the positive solutions of
\eqref{eq:nbvp} are given in terms of GTFs as
\begin{equation}
\label{eq:solnbvp}
\vp_r(x)=\frac{2H}{(2-r)\pi_r}
\sin_r{\left(\frac{\pi_r}{2H}x\right)}
\cos_r^{r-1}{\left(\frac{\pi_r}{2H}x\right)},
\end{equation}
where $r \in (1,2)$ is a free parameter.
Figure \ref{fig:sol_m} shows the graphs of $\vp_r$ for some $r$. 
\begin{figure}[htbp]
\begin{center}
\includegraphics[width=4cm,clip]{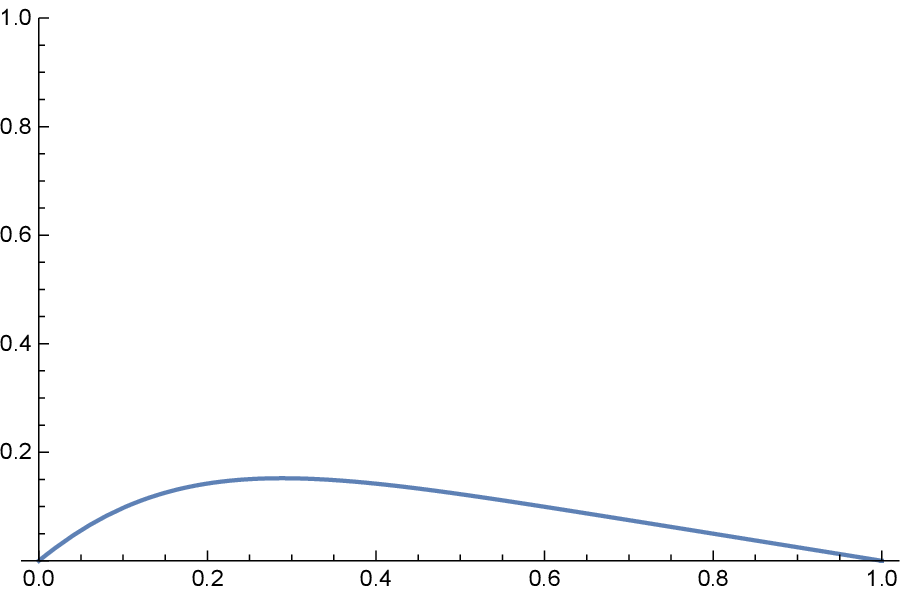}
\includegraphics[width=4cm,clip]{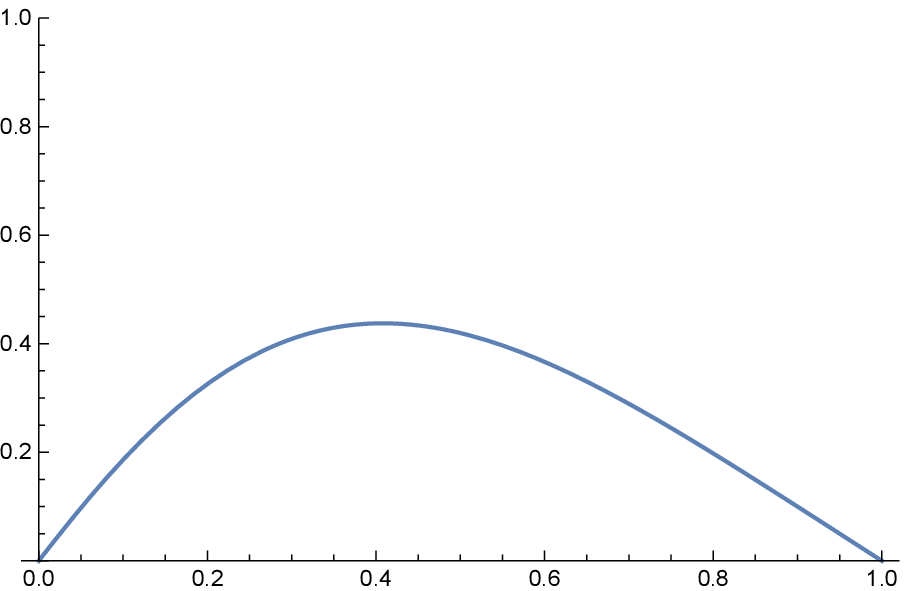}
\includegraphics[width=4cm,clip]{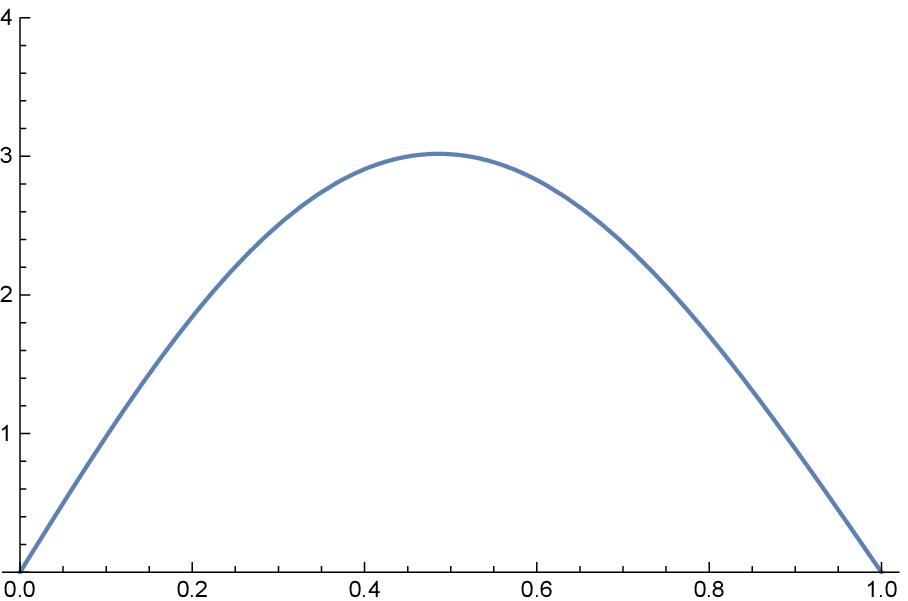}
\caption{Graphs of solutions of \eqref{eq:nbvp} with $H=1$ for $r=1.2,\ 1.5$ and $1.9$.}
\label{fig:sol_m}
\end{center}
\end{figure}

From the graphs in Figure \ref{fig:sol_m}, it is to be expected that 
any positive solution of \eqref{eq:nbvp}
takes the maximum at a point less than $x=H/2$.
Indeed, we can actually prove the following theorem.

\begin{thm}
\label{thm:max}
Any positive solution $\vp_r$ with $r \in (1,2)$ of \eqref{eq:nbvp} has one and only one
extremum 
$$\vp_r(x_r)=\frac{2H}{(2-r)\pi_r}{\frac{1}{r^{1/r}}\frac{1}{(r^*)^{1/r^*}}},$$
which is the maximum, at
$$x_r=\frac{2H}{\pi_r}\sin_r^{-1}{\frac{1}{r^{1/r}}}.$$
Moreover, $x_r<H/2$.
\end{thm}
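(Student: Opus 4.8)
The plan is to work directly from the explicit formula $\eqref{eq:solnbvp}$ for $\vp_r$, reducing everything to the study of the single-variable function
$$
g(\theta):=\sin_r{\theta}\,\cos_r^{r-1}{\theta}, \qquad \theta\in[0,\pi_r/2],
$$
since $\vp_r(x)=\frac{2H}{(2-r)\pi_r}\,g\!\left(\frac{\pi_r}{2H}x\right)$ is just a positive scalar multiple of $g$ composed with the increasing linear change of variable $x\mapsto\theta=\frac{\pi_r}{2H}x$ mapping $[0,H]$ onto $[0,\pi_r/2]$. Thus locating and evaluating the extremum of $\vp_r$ is equivalent to doing so for $g$, and the bound $x_r<H/2$ becomes $\theta_r<\pi_r/4$ for the maximizing angle $\theta_r$.

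The first step is to differentiate $g$. Using $(\sin_r\theta)'=\cos_r\theta$ and the chain rule together with the derivative of $\cos_r$ (which one gets from differentiating the Pythagorean-type identity $\eqref{eq:pr}$ in the case $p=q=r$, namely $\cos_r^r\theta+\sin_r^r\theta=1$, to obtain $(\cos_r\theta)'=-\sin_r^{r-1}\theta\,\cos_r^{2-r}\theta$), I expect
$$
g'(\theta)=\cos_r^r\theta-(r-1)\sin_r^r\theta,
$$
after simplification via the identity. Setting $g'(\theta)=0$ then gives $\cos_r^r\theta=(r-1)\sin_r^r\theta$, and combining this with $\cos_r^r\theta+\sin_r^r\theta=1$ yields $\sin_r^r\theta_r=1/r$ and $\cos_r^r\theta_r=(r-1)/r=1/r^*$. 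Hence $\sin_r\theta_r=r^{-1/r}$, so $\theta_r=\sin_r^{-1}(r^{-1/r})$ and $x_r=\frac{2H}{\pi_r}\theta_r$, matching the claimed formula. Substituting back, $g(\theta_r)=r^{-1/r}\cdot\big((r-1)/r\big)^{(r-1)/r}=r^{-1/r}(r^*)^{-1/r^*}$, which reproduces the stated extremal value after multiplying by the prefactor. Uniqueness of the critical point and its being a maximum follow because $g>0$ on $(0,\pi_r/2)$ with $g=0$ at both endpoints, while $g'(\theta)=0$ forces $\tan_r^r\theta:=\sin_r^r\theta/\cos_r^r\theta=1/(r-1)$, and the left side is strictly increasing in $\theta$; alternatively one checks $g'>0$ before $\theta_r$ and $g'<0$ after.

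The genuinely substantive part is the inequality $\theta_r<\pi_r/4$, equivalently $\sin_r\theta_r<\sin_r(\pi_r/4)$, i.e. $\sin_r^r\theta_r<\sin_r^r(\pi_r/4)$, which (by the identity and the symmetry relation $\cos_r\theta=\sin_r(\pi_r/2-\theta)$) amounts to showing that $\sin_r^r(\pi_r/4)>1/r$, since $\sin_r^r\theta_r=1/r$. The hard part will be pinning down the value or a sharp enough lower bound for $\sin_r^r(\pi_r/4)$ for all $r\in(1,2)$. The natural route is to compute $\sin_r^{-1}$ at the relevant level via the incomplete beta representation and compare the resulting level sets, or to exploit the symmetry $\sin_r^r(\pi_r/4)+\cos_r^r(\pi_r/4)=1$ together with $\cos_r(\pi_r/4)=\sin_r(\pi_r/4)$, which immediately gives $\sin_r^r(\pi_r/4)=1/2$. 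Then the desired inequality reduces to the elementary numerical fact $1/2>1/r$ for $r\in(1,2)$, which holds precisely on this range. I anticipate the only real care is in justifying $\cos_r(\pi_r/4)=\sin_r(\pi_r/4)$; this should come from the reflection identity $\sin_r\theta=\sin_r(\pi_r/2-\theta)$ evaluated at $\theta=\pi_r/4$ together with $\cos_r\theta=\sin_r(\pi_r/2-\theta)$, both standard consequences of the definition of $\sin_r$ and $\cos_r$, so the inequality $x_r<H/2$ ultimately rests on the clean identity $\sin_r^r(\pi_r/4)=1/2$.
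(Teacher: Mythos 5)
The first half of your argument---locating the unique critical point and evaluating the maximum---is correct and coincides with the paper's computation: up to the positive prefactor and the linear change of variable, the derivative simplifies via $\cos_r^r\theta+\sin_r^r\theta=1$ to $1-r\sin_r^r\theta$, giving $\sin_r^r\theta_r=1/r$, $\cos_r^r\theta_r=1/r^*$, and hence the stated $x_r$ and $\vp_r(x_r)$, with uniqueness from the monotonicity of $\sin_r$.

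The proof of $x_r<H/2$, however, does not work, and this is precisely the nontrivial part of the theorem. It rests on the identity $\cos_r(\pi_r/4)=\sin_r(\pi_r/4)$, which you deduce from the symmetry $\cos_r\theta=\sin_r(\pi_r/2-\theta)$. That symmetry is false for $r\neq 2$: near $\theta=\pi_r/2$ one has $\sin_r(\pi_r/2-\theta)\sim(\pi_r/2-\theta)$, whereas $\cos_r\theta=(1-\sin_r^r\theta)^{1/r}\sim c\,(\pi_r/2-\theta)^{1/(r-1)}$ with exponent $1/(r-1)>1$ for $r\in(1,2)$, so the two functions cannot agree. Equivalently, $\sin_r^r(\pi_r/4)=1/2$ would say that the beta distribution with parameters $1/r$ and $1/r^*$ has median $1/2$, which holds only when $1/r=1/r^*$, i.e. $r=2$. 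Moreover, even granting the identity, your final comparison is inverted: for $r\in(1,2)$ one has $1/r>1/2$, so ``$1/2>1/r$'' is false, and $\sin_r^r(\pi_r/4)=1/2<1/r=\sin_r^r\theta_r$ would force $\theta_r>\pi_r/4$, the \emph{opposite} of the claim. What is actually required is the strictly stronger inequality $\sin_r^r(\pi_r/4)>1/r$ for $r\in(1,2)$, i.e. \eqref{eq:1<r<2}, and no elementary symmetry supplies it. The paper obtains it from Lemma \ref{lem:maximum}: writing $\sin_r^{-1}(r^{-1/r})=\tfrac{\pi_r}{2}I_{1/r}(1/r,1/r^*)$, one shows $I_{1/r}(1/r,1/r^*)<1/2$ (a median estimate for the beta distribution) via the Payton--Young--Young substitution $s=\log\frac{r^*(1-t)}{rt}$, the reflection formula $I_x(a,b)=1-I_{1-x}(b,a)$, and the strict monotonicity of $\sinh{x}/x$. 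An argument of comparable substance is needed to close your proof.
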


It is worth pointing out that 
the fact $x_r<H/2$ in Theorem \ref{thm:max} is deduced from the nontrivial inequality 
$$\sin_r{\frac{\pi_r}{4}}>\frac{1}{r^{1/r}}, \quad r \in (1,2),$$
which will be proved in Corollary \ref{cor:max}.
The proof of this inequality relies on the estimate for median of the
beta distribution. We will give such inequalities in the form of two parameters 
(Lemma \ref{lem:maximum} and Corollary \ref{cor:maximum}).  

Section \ref{sec:hgf} establishes the following new formulas of Gaussian hypergeometric 
function $F(a,b;c;x)$ related to GTFs. 
For the definition of $F(a,b;c;x)$, see \eqref{eq:hgf} in Section 3.
\begin{thm}
\label{thm:hgf}
For $p,\ q \in (1,\infty)$ and $x \in (0,1)$,
\begin{align*}
F\left(\frac{1}{q},\frac{1}{p}-1;1+\frac{1}{q};x\right)
&=\frac{q\sin_{p,q}^{-1}{(x^{1/q})}+p^*x^{1/q}(1-x)^{1/p^*}}{(p^*+q)x^{1/q}},\\
F\left(1+\frac{1}{q},\frac{1}{p};2+\frac{1}{q};x\right)
&=\frac{p^*(q+1)(\sin_{p,q}^{-1}{(x^{1/q})}-x^{1/q}(1-x)^{1/p^*})}{(p^*+q)x^{1+1/q}}.
\end{align*}
\end{thm}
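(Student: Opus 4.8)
The plan is to turn both hypergeometric functions into incomplete beta functions, to identify those with the integral defining $\sin_{p,q}^{-1}$, and to relate the resulting beta integrals by a single integration by parts. The one structural observation that makes everything work is that \emph{both} functions have the form $F(a,b;a+1;x)$: in the first formula $a=1/q$ and $c=1+1/q$, and in the second $a=1+1/q$ and $c=2+1/q$. For this special value $c=a+1$ there is a clean reduction. Expanding $(1-t)^{-\beta}=\sum_n (\beta)_n t^n/n!$ and integrating term by term (using $(a)_n/(a+1)_n=a/(a+n)$ to resum), one obtains, for $x\in(0,1)$,
$$F(a,\beta;a+1;x)=\frac{a}{x^{a}}\int_0^x t^{a-1}(1-t)^{-\beta}\,dt=\frac{a}{x^{a}}B_x(a,1-\beta),$$
and this is the only analytic input required; no convergence or continuation issue arises because $x<1$.

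Applying this identity with $1-1/p=1/p^*$ gives
$$F\!\left(\tfrac1q,\tfrac1p-1;1+\tfrac1q;x\right)=\frac{B_x(1/q,\,1+1/p^*)}{q\,x^{1/q}},\qquad F\!\left(1+\tfrac1q,\tfrac1p;2+\tfrac1q;x\right)=\frac{(q+1)\,B_x(1+1/q,\,1/p^*)}{q\,x^{1+1/q}}.$$
On the other hand, the definition of $\sin_{p,q}^{-1}$ recalled in the introduction is exactly $B_x(1/q,1/p^*)=q\,\sin_{p,q}^{-1}(x^{1/q})$. Hence the whole theorem reduces to expressing $B_x(1/q,1+1/p^*)$ and $B_x(1+1/q,1/p^*)$ through $B_x(1/q,1/p^*)$ and the elementary term $x^{1/q}(1-x)^{1/p^*}$.

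This reduction step is the one genuine computation, and I expect it to be the crux. Integrating $B_x(1/q,1+1/p^*)=\int_0^x t^{1/q-1}(1-t)^{1/p^*}\,dt$ by parts with $u=(1-t)^{1/p^*}$ produces the boundary term $q\,x^{1/q}(1-x)^{1/p^*}$ together with $\tfrac{q}{p^*}B_x(1+1/q,1/p^*)$, while the splitting $t^{1/q}=t^{1/q-1}-t^{1/q-1}(1-t)$ yields $B_x(1+1/q,1/p^*)=B_x(1/q,1/p^*)-B_x(1/q,1+1/p^*)$. These two relations constitute a linear system in the two unknown beta integrals, and solving it gives
$$B_x\!\left(\tfrac1q,1+\tfrac1{p^*}\right)=\frac{q\bigl(B_x(1/q,1/p^*)+p^*x^{1/q}(1-x)^{1/p^*}\bigr)}{p^*+q},$$
$$B_x\!\left(1+\tfrac1q,\tfrac1{p^*}\right)=\frac{p^*\bigl(B_x(1/q,1/p^*)-q\,x^{1/q}(1-x)^{1/p^*}\bigr)}{p^*+q}.$$

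Finally I would substitute these two expressions into the reductions of the previous paragraph, replace $B_x(1/q,1/p^*)$ by $q\,\sin_{p,q}^{-1}(x^{1/q})$, and simplify; the spurious factors of $q$ and the powers of $x^{1/q}$ cancel exactly, delivering the two claimed identities. Thus, once the reduction $F(a,\beta;a+1;x)=a\,x^{-a}B_x(a,1-\beta)$ and the integration-by-parts recurrence are in hand, the remainder is pure bookkeeping.
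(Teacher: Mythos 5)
Your proof is correct, but it follows a genuinely different route from the paper's. The paper first proves (Lemma \ref{lem:integral}) the closed forms of $I=\int_0^x\cos_{p,q}^p$ and $J=\int_0^x\sin_{p,q}^q$ via $I+J=x$ and one integration by parts, and then invokes an external integral formula from \cite{KT2019} expressing $I$ and $J$ as $\sin_{p,q}{x}\,F(\frac1q,\frac1p-1;1+\frac1q;\sin_{p,q}^q{x})$ and $\frac{1}{q+1}\sin_{p,q}^{q+1}{x}\,F(1+\frac1q,\frac1p;2+\frac1q;\sin_{p,q}^q{x})$; equating the two representations and setting $x\mapsto\sin_{p,q}^{-1}(x^{1/q})$ gives the theorem. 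You instead work entirely on the beta-function side: the elementary resummation $F(a,\beta;a+1;x)=a\,x^{-a}B_x(a,1-\beta)$ (valid since $(a)_n/(a+1)_n=a/(a+n)$, with $1-\beta=1+1/p^*$ resp.\ $1/p^*$ both positive so the paper's definition of $B_x$ applies), followed by the two contiguous relations
\begin{align*}
B_x\Bigl(\tfrac1q,1+\tfrac1{p^*}\Bigr)&=q\,x^{1/q}(1-x)^{1/p^*}+\tfrac{q}{p^*}B_x\Bigl(1+\tfrac1q,\tfrac1{p^*}\Bigr),\\
B_x\Bigl(1+\tfrac1q,\tfrac1{p^*}\Bigr)&=B_x\Bigl(\tfrac1q,\tfrac1{p^*}\Bigr)-B_x\Bigl(\tfrac1q,1+\tfrac1{p^*}\Bigr),
\end{align*}
whose solution combined with $B_x(\frac1q,\frac1{p^*})=q\sin_{p,q}^{-1}(x^{1/q})$ I have checked reproduces both identities exactly. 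The two arguments are, at bottom, the same computation in different variables (the substitution $t=\sin_{p,q}^q{s}$ turns your beta integrals into the paper's $I$ and $J$, and your two recurrences into the paper's \eqref{eq:IJ1} and \eqref{eq:IJ2}), but yours is self-contained --- it does not rely on the quoted formula (14) of \cite{KT2019} --- while the paper's version isolates the GTF integrals $\int\cos_{p,q}^p$ and $\int\sin_{p,q}^q$ as a statement of independent interest.
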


In particular, one can find these formulas for $p=q=2$ on the web sites \cite{MFS072303258301}
and \cite{MFS072303288801}, respectively, in the Mathematical Functions Site
by Wolfram Research. Theorem \ref{thm:hgf} gives generalizations of those formulas.

Section \ref{sec:lyapunov} is devoted to the study of 
the $L^q$-Lyapunov inequality for the one-dimensional
$p$-Laplacian. GTFs yield an exact expression to the best constant of the inequality.
Let $p \in (1,\infty)$ and $a \in L^\infty(0,L)$.
Then, we consider the following homogeneous boundary value problem.
\begin{equation}
\label{eq:bvp'}
\begin{cases}
-(|u'|^{p-2}u')'=a(x)|u|^{p-2}u, \quad 0<x<L,\\
u(0)=u(L)=0.
\end{cases}
\end{equation}
A function $u$ is called a solution of \eqref{eq:bvp'} if $u \in W^{1,p}_0(0,L)$ 
satisfies the first equation of \eqref{eq:bvp'} in the weak sense.
We define 
$$\Lambda:=\{a \in L^\infty(0,L): \eqref{eq:bvp'}\ \mbox{has a nontrivial
solution}\}.$$
We denote the $L^q(0,L)$-norm for $q \in [1,\infty]$ by $\|\cdot \|_q$:
for $a \in L^q(0,L)$,
$$\|a\|_q:=
\begin{cases}
\disp \left(\int_0^L |a(x)|^q\,dx\right)^{1/q}, & q \in [1,\infty),\\
\underset{x \in (0,L)}{\operatorname{esssup}}{|a(x)|}, & q=\infty.
\end{cases}$$

In case $q=1$, Elbert \cite[Theorem 6]{E1981} shows that if $a \in \Lambda$, then
\begin{equation}
\label{eq:ineqp_q=1}
\|a\|_1>\frac{2^p}{L^{p-1}}
\end{equation}
and the constant in the right-hand side is optimal.
The inequality \eqref{eq:ineqp_q=1} for $p=2$ is called the \textit{Lyapunov inequality}
(see \cite{CMV2005} and \cite{P2013} for the complete bibliography).

We are interested in the best constant for the $L^q$-norm of $a \in \Lambda$ 
when $q \in (1,\infty)$. In the linear case $p=2$, 
Egorov and Kondratiev \cite{EK1996}, and
Ca\~{n}ada, Montero and Villegas \cite{CMV2005}
give the best constant for $L^q$-norm of $a$
(see also \cite{CV2015} and \cite{P2013}).
Pinasco \cite{P2013} indicates the possibility to extend their results in 
\cite{EK1996} to the nonlinear
case $p \neq 2$ by using GTFs, and gives, however, no expression of the best
constant. By virtue of the idea of \cite{CMV2005} with a result of Dr\'{a}bek and Man\'{a}sevich
\cite{DM1999}, we can obtain the best constant as follows.

\begin{thm}
\label{thm:lyapunov}
Let $p \in (1,\infty)$. Then, for any $a \in \Lambda$,
\begin{equation}
\label{eq:ineqp}
\|a\|_q \geq 
\begin{cases}
\dfrac{2^p(p-1)(q-1)^{p-1+1/q}}{L^{p-1/q}q^{p-1}(pq-1)^{1/q}}
\left(\disp \int_0^{\pi_p/2}\frac{dx}{\sin_p^{1/q}{x}}\right)^p, & q \in (1,\infty),\\[12pt]
(p-1)\left(\dfrac{\pi_p}{L}\right)^p, & q=\infty,
\end{cases}
\end{equation}
where
$$\int_0^{\pi_p/2} \frac{dx}{\sin_p^{1/q}{x}}
=\frac{q^*\pi_{p,pq^*}}{2}
=\frac{1}{p}B\left(\frac{1}{p^*},\frac{1}{pq^*}\right).
$$
Moreover, the constants of \eqref{eq:ineqp} are optimal and attained by 
\begin{equation*}
a(x)=
\begin{cases}
(p-1)q^*\left(\dfrac{\pi_{p,pq^*}}{L}\right)^p \sin_{p,pq^*}^{p/(q-1)}
{\left(\dfrac{\pi_{p,pq^*}}{L}x\right)}, & q \in (1,\infty),\\[12pt]
(p-1)\left( \dfrac{\pi_p}{L} \right)^p, & q=\infty.
\end{cases}
\end{equation*}
\end{thm}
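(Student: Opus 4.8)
The plan is to adapt the Ca\~nada--Montero--Villegas scheme \cite{CMV2005}: turn the existence of a solution of \eqref{eq:bvp'} into an energy identity, apply H\"older's inequality to pass from $\|a\|_q$ to a Sobolev-type quotient, and then evaluate the best constant of that quotient explicitly through GTFs and the beta function, the extremal being governed by \eqref{eq:pep}.

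First I would fix $a\in\Lambda$ together with a nontrivial solution $u\in W^{1,p}_0(0,L)$, test the weak form of \eqref{eq:bvp'} against $u$, and use the boundary conditions to obtain $\int_0^L|u'|^p\,dx=\int_0^L a|u|^p\,dx$. H\"older's inequality with exponents $q,q^*$ applied to the right-hand side gives $\int_0^L|u'|^p\,dx\le\|a\|_q(\int_0^L|u|^{pq^*}\,dx)^{1/q^*}$, whence
\[ \|a\|_q\ge\frac{\int_0^L|u'|^p\,dx}{\left(\int_0^L|u|^{pq^*}\,dx\right)^{1/q^*}}\ge\inf_{0\ne v\in W^{1,p}_0(0,L)}\frac{\int_0^L|v'|^p\,dx}{\left(\int_0^L|v|^{pq^*}\,dx\right)^{1/q^*}}=:C_{p,q}. \]
Thus \eqref{eq:ineqp} reduces to identifying $C_{p,q}$ with the displayed constant; note that the second GTF parameter $s:=pq^*$ in the statement is precisely the critical exponent arising here.

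Next I would solve the minimization defining $C_{p,q}$. Since $W^{1,p}_0(0,L)\hookrightarrow L^{s}(0,L)$ compactly in one dimension, a positive minimizer $v$ exists, and its Euler--Lagrange equation $-(|v'|^{p-2}v')'=\lambda|v|^{s-2}v$ is solved, up to scaling, by $\sin_{p,s}$ in view of \eqref{eq:pep}; concretely $v(x)=\sin_{p,s}(\pi_{p,s}x/L)$ (the closed-form first eigenfunction of Dr\'abek--Man\'asevich \cite{DM1999}), traversing one half-period on $[0,L]$, with $\lambda=(p-1)q^*(\pi_{p,s}/L)^p$. Returning $v$ to the energy identity collapses the quotient to $C_{p,q}=\lambda(\int_0^L|v|^{s}\,dx)^{1/q}$, so the task becomes the evaluation of a single integral. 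Substituting $t=\sin_{p,s}(\pi_{p,s}x/L)$ and using $\cos_{p,s}=(1-t^{s})^{1/p}$ from \eqref{eq:pr} turns $\int_0^L\sin_{p,s}^{s}(\pi_{p,s}x/L)\,dx$ into $\frac{2}{k}\int_0^1 t^{s}(1-t^{s})^{-1/p}\,dt$ with $k=\pi_{p,s}/L$; after $w=t^{s}$ this is a beta integral, and the reduction $B(1+\tfrac1s,\tfrac1{p^*})=\tfrac{q-1}{pq-1}B(\tfrac1s,\tfrac1{p^*})$ combined with the definition \eqref{eq:pipq} of $\pi_{p,s}$ simplifies it neatly to $\int_0^L|v|^{s}\,dx=(q-1)L/(pq-1)$.

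It then remains to re-express $\pi_{p,s}$ via $\int_0^{\pi_p/2}\sin_p^{-1/q}{x}\,dx=\tfrac{q^*}{2}\pi_{p,s}=\tfrac1p B(\tfrac1{p^*},\tfrac1{s})$ (again from \eqref{eq:pipq}) and to collect the resulting powers of $2$, $q-1$, $q$ and $pq-1$, which should reproduce exactly the constant in \eqref{eq:ineqp}. For optimality, equality in H\"older forces $|a|^q\propto|v|^{s}$, that is $a\propto|v|^{p/(q-1)}$; since $p/(q-1)+p-1=s-1$, inserting the displayed $a$ into \eqref{eq:bvp'} shows that $v$ solves it, so $a\in\Lambda$ and $\|a\|_q=C_{p,q}$. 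The case $q=\infty$ is the limit $s=p$, where the quotient becomes the Rayleigh quotient of the $p$-Laplacian with first eigenpair $(p-1)(\pi_p/L)^p$ and $\sin_p(\pi_p x/L)$. I expect the soft steps (energy identity, H\"older, compactness) to be routine; the real obstacle is the beta-function bookkeeping that produces the precise grouping of constants in \eqref{eq:ineqp}, together with confirming that the Dr\'abek--Man\'asevich extremal indeed attains the global infimum $C_{p,q}$ rather than being merely a critical point.
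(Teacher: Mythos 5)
Your proposal follows essentially the same route as the paper: the energy identity plus H\"older reduces \eqref{eq:ineqp} to the infimum of the quotient $\int_0^L|v'|^p\,dx/(\int_0^L|v|^{pq^*}\,dx)^{1/q^*}$, the minimizer is identified with the Dr\'abek--Man\'asevich half-period $\sin_{p,pq^*}$ profile via the Euler--Lagrange equation (the paper resolves your stated worry by invoking the classification of nonnegative solutions of that autonomous problem in \cite{DM1999} and \cite[Theorem 2.1]{T2012}, exactly as you anticipate), and equality in H\"older yields the extremal weight $a\propto|v|^{p/(q-1)}$. The only cosmetic difference is that you evaluate $\int_0^L|v|^{pq^*}\,dx=(q-1)L/(pq-1)$ by a direct beta-integral substitution, whereas the paper uses the antiderivative formula \eqref{eq:sin} of Lemma \ref{lem:integral}; both give the same constant.
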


In case $p=2$, the constants in the right-hand side of \eqref{eq:ineqp} are same as 
in \cite[Theorem 2.1]{CMV2005}.

This paper is organized as follows. Section \ref{sec:nbvp} deals with the profiles
of positive solutions of the nonlocal boundary value problem \eqref{eq:nbvp} and
we prove Theorem \ref{thm:max}.
Section \ref{sec:hgf} provides formulas of Gaussian hypergeometric functions related
to GTFs and we show Theorem \ref{thm:hgf}.
Section \ref{sec:lyapunov} is intended to obtain the best constant of $L^q$-Lyapunov
inequality for the one-dimensional $p$-Laplacian and to give the proof of Theorem \ref{thm:lyapunov}.

%%%%%%%%%%%%%%%%%%%%%%%%%%

\section{Proof of Theorem \ref{thm:max}}
\label{sec:nbvp}

To show (the latter half of) Theorem \ref{thm:max}, 
the following lemma is crucial. 

\begin{lem}
\label{lem:maximum}
If $p^*>q>1$, then
\begin{equation}
\label{eq:pi/4}
\left(\frac{p^*}{p^*+q}\right)^{1/q}<\sin_{p,q}{\frac{\pi_{p,q}}{4}};
\end{equation}
if $p^*=q>1$, then
\begin{equation}
\label{eq:pi/4=}
\sin_{p,p^*}{\frac{\pi_{p,p^*}}{4}}=\frac{1}{2^{1/p^*}};
\end{equation}
and if $q>p^*>1$, then
$$\sin_{p,q}{\frac{\pi_{p,q}}{4}}<\left(\frac{p^*}{p^*+q}\right)^{1/q}.$$
\end{lem}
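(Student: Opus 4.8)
The plan is to recast all three inequalities as a single comparison between the \emph{median} and the \emph{mean} of a beta distribution, and then to settle that comparison by an ``odds-reflection'' involution. Writing $a=1/q$ and $b=1/p^*$, the identity $\sin_{p,q}^{-1}{x}=\frac1q B_{x^q}(a,b)$ together with $\pi_{p,q}/2=\sin_{p,q}^{-1}{1}=\frac1q B(a,b)$ shows that $m:=\sin_{p,q}^{q}{(\pi_{p,q}/4)}$ is characterized by $B_{m}(a,b)=\tfrac12 B(a,b)$; that is, $m$ is the median of the density $f(t):=t^{a-1}(1-t)^{b-1}$ on $(0,1)$. Meanwhile $\bigl(p^*/(p^*+q)\bigr)^{1/q}=\mu^{1/q}$, where $\mu:=a/(a+b)=p^*/(p^*+q)$ is the \emph{mean} of $f$. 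Since $s\mapsto s^q$ is increasing, each of \eqref{eq:pi/4}, \eqref{eq:pi/4=} and the third inequality is equivalent to the corresponding comparison $m\gtrless\mu$, and the trichotomy $p^*\gtrless q$ is precisely $a\gtrless b$. In the symmetric case $p^*=q$ one has $a=b$, the density is symmetric about $1/2$, so $m=\mu=1/2$ and \eqref{eq:pi/4=} follows at once.

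For the strict cases I would introduce $\rho:=a/b=p^*/q$ and the map $\phi$ of $(0,1)$ onto itself defined by $\frac{\phi}{1-\phi}\cdot\frac{t}{1-t}=\rho^2$. This $\phi$ is a decreasing involution whose unique fixed point is $t=\mu$, and it interchanges $(0,\mu)$ with $(\mu,1)$. Changing variables $s=\phi(t)$ in $\int_\mu^1 f$ gives $\int_\mu^1 f(s)\,ds=\int_0^\mu f(\phi(t))\,|\phi'(t)|\,dt$, so the sign of $\int_\mu^1 f-\int_0^\mu f$, equivalently the sign of $m-\mu$, is governed entirely by the pointwise comparison of $f(\phi(t))\,|\phi'(t)|$ with $f(t)$ on $(0,\mu)$.

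The heart of the proof is this pointwise inequality. After substituting $w=\rho^2(1-t)/t$ (so that $t\in(0,\mu)$ corresponds to $w\in(\rho,\infty)$) and simplifying, the inequality $f(\phi)\,|\phi'|\gtrless f$ becomes $R(w)\gtrless1$, where $R(w)=w^{a-b}(\rho^2+w)^{a+b}\big/\bigl(\rho^{2a}(1+w)^{a+b}\bigr)$ and $R(\rho)=1$. Differentiating, $(\ln R)'(w)=\frac1w\bigl[(a-b)-(a+b)(\rho^2-1)\,h(w)\bigr]$ with $h(w)=w/\bigl((\rho^2+w)(1+w)\bigr)=1/\bigl(w+\rho^2/w+(\rho^2+1)\bigr)$. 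The decisive observation is that $h$ attains its maximum at $w=\rho$ by the elementary bound $w+\rho^2/w\ge2\rho$, and that at that point $(a+b)(\rho^2-1)h(\rho)=a-b$. Hence for $w\neq\rho$ the bracket keeps a fixed sign: positive when $\rho>1$ and negative when $\rho<1$, so $\ln R$ is strictly monotone on $(\rho,\infty)$, giving $R(w)>1$ when $p^*>q$ and $R(w)<1$ when $q>p^*$. This yields $m>\mu$ in the first case and $m<\mu$ in the last, which are \eqref{eq:pi/4} and the third inequality.

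I expect the main obstacle to be exactly this last step: producing the clean pointwise inequality. The involution $\phi$ is essentially forced, since its fixed point must be the mean $\mu$, but the work lies in reducing the comparison to a single monotone quantity $R$ and recognizing that the AM--GM extremum of $h$ at the fixed point $w=\rho$ is what pins down the sign. One must also be careful about orientation in the change of variables (because $\phi$ is decreasing, $|\phi'|$ enters) and about the double contact at $w=\rho$, where both $R-1$ and $(\ln R)'$ vanish; this tangency is what makes the estimate sharp and is precisely consistent with the equality case $p^*=q$.
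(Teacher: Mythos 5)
Your proof is correct, and its overall architecture coincides with the paper's: both arguments reduce the lemma to comparing the median $m$ of the Beta$(1/q,1/p^*)$ distribution with its mean $\mu=p^*/(p^*+q)$, and both decide that comparison by reflecting the tail beyond the mean back onto $(0,\mu)$ and comparing integrands pointwise. Your odds-reflection involution $\frac{\phi}{1-\phi}\cdot\frac{t}{1-t}=\rho^2$ is exactly the map $s\mapsto -s$ in the logistic coordinate $s=\log(p^*(1-t)/(qt))$ that the paper borrows from Payton--Young--Young, and (after the substitution $w=\rho e^{s}$) your ratio $R(w)$ is precisely the ratio of the two integrands compared in \eqref{eq:integrands}. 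The one genuine point of divergence is how that pointwise inequality is settled: the paper rewrites \eqref{eq:integrands} as $\sinh(X_{q,p^*}s)/(X_{q,p^*}s)<\sinh(X_{p^*,q}s)/(X_{p^*,q}s)$ and invokes the monotonicity of $\sinh{x}/x$, a one-line finish; you instead differentiate $\ln R$ and use AM--GM to show that the bracket $(a-b)-(a+b)(\rho^2-1)h(w)$ keeps a fixed sign away from the tangency point $w=\rho$. Both finishes are sound. The paper's is shorter and makes the mechanism (monotonicity of $\sinh{x}/x$) transparent; yours is self-contained, avoids the hyperbolic reformulation, and makes explicit the double contact at $w=\rho$ --- the simultaneous vanishing of $R-1$ and $(\ln R)'$ --- which is what accounts for the equality case $p^*=q$ and the sharpness of the estimate.
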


\begin{proof}
Let $I_x(a,b)$ denote the regularized incomplete beta function 
$$I_x(a,b):=\frac{B_x(a,b)}{B(a,b)}, \quad 0 \leq x \leq 1,\ a,\ b>0.$$
It is easily seen that $I_x$ satisfies
\begin{equation}
\label{eq:I_x}
I_x(a,b)=1-I_{1-x}(b,a)
\end{equation}
(see for instance \cite[6.6.3 in p.\,263]{AS1964} and \cite[8.17.4 in p.\,183]{OLBC2010}).

Let 
$$X_{p,q}:=\frac{p}{p+q}.$$
From the definition of $\sin_{p,q}^{-1}{x}$, 
\begin{equation}
\label{eq:sinpq}
\sin_{p,q}^{-1}{(X_{p^*,q}^{1/q})}=\frac{1}{q}B_{X_{p^*,q}}\left(\frac{1}{q},\frac{1}{p^*}\right)
=\frac{\pi_{p,q}}{2}I_{X_{p^*,q}}\left(\frac{1}{q},\frac{1}{p^*}\right).
\end{equation}
Following Payton, Young and Young \cite{PYY1989} and setting
$$s=\log{\frac{p^*(1-t)}{qt}},$$
we have
\begin{align}
&I_{X_{p^*,q}}\left(\frac{1}{q},\frac{1}{p^*}\right) \notag \\
& \quad =\frac{1}{B(1/q,1/p^*)}\int_{\infty}^0 
\left(\frac{p^*}{p^*+qe^s}\right)^{1/q-1}
\left(\frac{qe^s}{p^*+qe^s}\right)^{1/p^*-1}
\frac{-p^*qe^s}{(p^*+qe^s)^2}\,ds \notag \\
& \quad =\frac{(p^*)^{1/q}q^{1/p^*}}{B(1/q,1/p^*)}
\int_0^\infty \frac{e^{s/p^*}}{(p^*+qe^s)^{1/p^*+1/q}}\,ds.
\label{eq:x}
\end{align}
Moreover, interchanging $p^*$ into $q$, we obtain
\begin{align}
I_{X_{q,p^*}}\left(\frac{1}{p^*},\frac{1}{q}\right)
&=\frac{q^{1/p^*}(p^*)^{1/q}}{B(1/p^*,1/q)}
\int_0^\infty \frac{e^{s/q}}{(q+p^*e^s)^{1/q+1/p^*}}\,ds \notag \\
&=\frac{(p^*)^{1/q}q^{1/p^*}}{B(1/q,1/p^*)}
\int_0^\infty \frac{e^{-s/p^*}}{(p^*+qe^{-s})^{1/p^*+1/q}}\,ds.
\label{eq:1-x}
\end{align}

Consider the case $p^*>q>1$.
In this case, we can see that for $s>0$,
\begin{equation}
\label{eq:integrands}
\frac{e^{s/p^*}}{(p^*+qe^s)^{1/p^*+1/q}}<\frac{e^{-s/p^*}}{(p^*+qe^{-s})^{1/p^*+1/q}}.
\end{equation}
Indeed, it is equivalent to
$$\frac{\sinh{(X_{q,p^*}s)}}{X_{q,p^*}s}<\frac{\sinh{(X_{p^*,q}s)}}{X_{p^*,q}s},$$
which holds true since $\sinh{x}/x$ is strictly increasing. It follows from \eqref{eq:x}--\eqref{eq:integrands}
that
$$I_{X_{p^*,q}}\left(\frac{1}{q},\frac{1}{p^*}\right)
<I_{X_{q,p^*}}\left(\frac{1}{p^*},\frac{1}{q}\right).$$
Since $X_{p^*,q}+X_{q,p^*}=1$ and \eqref{eq:I_x}, we have
$$I_{X_{p^*,q}}\left(\frac{1}{q},\frac{1}{p^*}\right)
=1-I_{X_{q,p^*}}\left(\frac{1}{p^*},\frac{1}{q}\right)
<1-I_{X_{p^*,q}}\left(\frac{1}{q},\frac{1}{p^*}\right);$$
so that
\begin{equation}
\label{eq:betadistribution}
I_{X_{p^*,q}}\left(\frac{1}{q},\frac{1}{p^*}\right)<\frac12.
\end{equation}
Therefore,  by \eqref{eq:sinpq},
$$\sin_{p,q}^{-1}{(X_{p^*,q}^{1/q})}<\frac{\pi_{p,q}}{4},$$
and \eqref{eq:pi/4} is proved. The remaining cases also follow in a similar way.
\end{proof}

\begin{rem}
(i) The equality \eqref{eq:pi/4=} is also obtained in \cite[Lemma 2.1]{T2016b}.

(ii) The inequality \eqref{eq:betadistribution} means that $X_{p^*,q}$ is less than the median of beta
distribution with parameters $1/q$ and $1/p^*$.
\end{rem}

\begin{cor}
\label{cor:max}
If $r \in (1,2)$, then
\begin{equation}
\label{eq:1<r<2}
\frac{1}{r^{1/r}}<\sin_r{\frac{\pi_r}{4}};
\end{equation}
if $r=2$, then
$$\sin_2{\frac{\pi_2}{4}}=\frac{1}{\sqrt{2}};$$
and if $r \in (2,\infty)$, then
$$\sin_r{\frac{\pi_r}{4}}<\frac{1}{r^{1/r}}.$$
\end{cor}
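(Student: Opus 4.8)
The plan is to obtain Corollary \ref{cor:max} as the single-parameter specialization $p=q=r$ of Lemma \ref{lem:maximum}. First I would recall from the introduction that when $p=q=r$ one writes $\sin_{r,r}{x}=\sin_r{x}$ and $\pi_{r,r}=\pi_r$. Consequently the left-hand sides in all three cases of the lemma collapse to the single quantity $\sin_r{(\pi_r/4)}$, which is exactly the object appearing in the corollary.

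Next I would simplify the threshold constant $(p^*/(p^*+q))^{1/q}$ under this substitution. Here $p^*=r^*=r/(r-1)$ and $q=r$, so that $p^*+q=r/(r-1)+r=r^2/(r-1)$ and therefore $p^*/(p^*+q)=1/r$. Raising to the power $1/q=1/r$ gives $(1/r)^{1/r}=1/r^{1/r}$, which is precisely the constant written in the corollary. For the borderline case the lemma's value $1/2^{1/p^*}$ becomes $1/2^{1/r}$; as shown below this case is forced to $r=2$, where $1/2^{1/2}=1/\sqrt{2}$, matching the middle statement.

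It then remains to match the trichotomy of hypotheses. The condition $p^*>q>1$ reads $r/(r-1)>r$, equivalently $1/(r-1)>1$, i.e.\ $r\in(1,2)$; likewise $p^*=q$ is equivalent to $r=2$, and $q>p^*>1$ is equivalent to $r\in(2,\infty)$. Thus the three regimes $r\in(1,2)$, $r=2$, $r\in(2,\infty)$ in the corollary correspond term by term to the three cases of Lemma \ref{lem:maximum}, and each inequality (or equality) of the corollary is a verbatim reading of the matching line of the lemma.

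I do not expect any genuine obstacle: the argument is purely a change of notation together with the elementary simplification $p^*/(p^*+q)=1/r$ when $p=q=r$. All of the analytic content — in particular the strict inequality \eqref{eq:betadistribution}, equivalently the fact that $X_{p^*,q}$ lies below the median of the relevant beta distribution — is already carried by Lemma \ref{lem:maximum}, so the proof of the corollary is essentially immediate once the specialization is spelled out.
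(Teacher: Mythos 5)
Your proposal is correct and coincides with the paper's own proof: the corollary is exactly Lemma \ref{lem:maximum} specialized to $p=q=r$, with the identification $p^*/(p^*+q)=1/r$ and the observation that $r^*>r$, $r^*=r$, $r^*<r$ correspond to $r\in(1,2)$, $r=2$, $r\in(2,\infty)$. The paper states this more tersely, but the content is identical.
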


\begin{proof}
Let $r \in (1,2)$. Then $r^*>r>1$, and hence \eqref{eq:pi/4} with $p=q=r$,
i.e. \eqref{eq:1<r<2} holds true. 
The remaining parts also follow in a similar way.
\end{proof}

We are now in a position to show Theorem \ref{thm:max}.

\begin{proof}[Proof of Theorem \ref{thm:max}]
Differentiating \eqref{eq:solnbvp} with using \eqref{eq:pep}, we have
\begin{align*}
\vp_r'(x)
&=\frac{1}{2-r}\left(-(r-1)\sin_r^r{\left(\frac{\pi_r}{2H}x\right)}
+\cos_r^r{\left(\frac{\pi_r}{2H}x\right)}\right) 
\\
&=\frac{1}{2-r}\left(1-r\sin_r^r{\left(\frac{\pi_r}{2H}x\right)}\right).
\end{align*}
Thus, $\vp_r$ has the maximum 
$$\vp_r(x_r)=\frac{2H}{(2-r)\pi_r}{\frac{1}{r^{1/r}}\frac{1}{(r^*)^{1/r^*}}}$$
only at
$$x=x_r:=\frac{2H}{\pi_r}\sin_r^{-1}{\frac{1}{r^{1/r}}}.$$
Moreover, since $r \in (1,2)$, by \eqref{eq:1<r<2} of Corollary \ref{cor:max}, 
$$x_r<\frac{2H}{\pi_r}\cdot \frac{\pi_r}{4}=\frac{H}{2},$$
and the proof is complete.
\end{proof}

\begin{rem}
Observing \eqref{eq:nbvp} directly, one can show the facts:
$\vp_r$ has no local minimum in $(0,1)$;
and $\vp_r$ is asymmetric with respect to $x=H/2$.
However, it seems to be difficult to prove $x_r<H/2$ in this way.
\end{rem}

In \cite[Theorem 2.1]{KT2019}, 
the authors also study the following problem to solve \eqref{eq:nbvp}.
\begin{equation}
\label{eq:gnbvp}
\begin{cases}
(p-q)u'-pq(u')^2+(p+q)uu''+1=0,\\
u(0)=u(H)=0.
\end{cases}
\end{equation}
The positive solution of \eqref{eq:gnbvp} is
$$u=\frac{2H}{q\pi_{p^*,q}} \cos_{p^*,q}^{p^*-1}
{\left(\frac{\pi_{p^*,q}}{2H}x\right)}\sin_{p^*,q}{\left(\frac{\pi_{p^*,q}}{2H}x\right)}.$$
As in the proof of Theorem \ref{thm:max}, with the aid of 
Lemma \ref{lem:maximum}, we can show the following result.
\begin{cor}
\label{cor:maximum}
Any positive solution $u$ with $p,\ q \in (1,\infty)$ of \eqref{eq:gnbvp} has one and only one
extremum 
$$u(x_{p,q})=\frac{2H}{q\pi_{p^*,q}} \left(\frac{q}{p+q}\right)^{1/p}
\left(\frac{p}{p+q}\right)^{1/q},$$
which is the maximum, at
$$x_{p,q}=\frac{2H}{\pi_{p^*,q}}\sin_{p^*,q}^{-1}{\left(\frac{p}{p+q}\right)^{1/q}}$$
Moreover, $x_{p,q}<H/2$ if $p>q>1$; $x_{p,q}=H/2$ if $p=q>1$; $x_{p,q}>H/2$ if $q>p>1$.
\end{cor}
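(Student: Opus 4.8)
The plan is to follow the proof of Theorem~\ref{thm:max} almost verbatim: produce a closed form for $u'$, read off its unique sign change, and reduce the comparison with $H/2$ to Lemma~\ref{lem:maximum}. First I would differentiate the given solution. Setting $\xi=(\pi_{p^*,q}/(2H))x$ and using \eqref{eq:pep} for the pair $(p^*,q)$---whose conjugate exponent is $(p^*)^*=p$---gives $(\cos_{p^*,q}^{p^*-1}{\xi})'=-(q/p)\sin_{p^*,q}^{q-1}{\xi}$, while $(\sin_{p^*,q}{\xi})'=\cos_{p^*,q}{\xi}$. The product rule then yields
\[
\frac{d}{d\xi}\bigl(\cos_{p^*,q}^{p^*-1}{\xi}\,\sin_{p^*,q}{\xi}\bigr)
=\cos_{p^*,q}^{p^*}{\xi}-\frac{q}{p}\sin_{p^*,q}^{q}{\xi},
\]
and substituting $\cos_{p^*,q}^{p^*}{\xi}=1-\sin_{p^*,q}^{q}{\xi}$ from \eqref{eq:pr}, after the prefactor $2H/(q\pi_{p^*,q})$ and the chain-rule factor $\pi_{p^*,q}/(2H)$ combine to $1/q$, I expect to obtain
\[
u'(x)=\frac{1}{q}\left(1-\frac{p+q}{p}\sin_{p^*,q}^{q}\left(\frac{\pi_{p^*,q}}{2H}x\right)\right),
\]
the exact analogue of the formula for $\vp_r'$.

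Second, because $\sin_{p^*,q}{\xi}$ increases from $0$ to $1$ on $(0,\pi_{p^*,q}/2)$, the map $\xi\mapsto\sin_{p^*,q}^{q}{\xi}$ is strictly increasing, so $u'$ is strictly decreasing and changes sign exactly once, from positive to negative; thus $u$ has a unique critical point, which is its maximum. Solving $u'=0$ amounts to $\sin_{p^*,q}^{q}{\xi}=p/(p+q)$, which gives the stated $x_{p,q}$. Evaluating $u$ there, with $\cos_{p^*,q}^{p^*}=1-\sin_{p^*,q}^{q}=q/(p+q)$ and the identity $(p^*-1)/p^*=1/p$, produces the claimed extremal value $u(x_{p,q})$.

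Third, the comparison of $x_{p,q}$ with $H/2$ is equivalent, after dividing by $2H/\pi_{p^*,q}$, to comparing $\sin_{p^*,q}^{-1}((p/(p+q))^{1/q})$ with $\pi_{p^*,q}/4$, that is, to comparing $(p/(p+q))^{1/q}$ with $\sin_{p^*,q}(\pi_{p^*,q}/4)$. This is precisely Lemma~\ref{lem:maximum} with its first parameter taken to be $p^*$: since $(p^*)^*=p$, the lemma's quantity becomes $(p/(p+q))^{1/q}$, and its three regimes---first conjugate exceeding $q$, equal to $q$, less than $q$---correspond to $p>q$, $p=q$, $q>p$, yielding $x_{p,q}<H/2$, $x_{p,q}=H/2$, $x_{p,q}>H/2$ respectively.

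The only delicate points are bookkeeping ones: one must invoke \eqref{eq:pep} for the functions with parameters $(p^*,q)$ so that the conjugate appearing in the derivative is $p$ rather than $p^*$, and one must track the parameter substitution carefully when applying Lemma~\ref{lem:maximum}. Once these are handled, the differentiation collapses cleanly via \eqref{eq:pr} and no genuinely new difficulty arises beyond Theorem~\ref{thm:max}.
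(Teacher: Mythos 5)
Your proposal is correct and follows exactly the route the paper intends: the paper gives no written proof of this corollary beyond the remark that it proceeds ``as in the proof of Theorem \ref{thm:max}, with the aid of Lemma \ref{lem:maximum},'' and your differentiation via \eqref{eq:pep} and \eqref{eq:pr} for the pair $(p^*,q)$, the resulting formula $u'=\frac{1}{q}\bigl(1-\frac{p+q}{p}\sin_{p^*,q}^{q}\xi\bigr)$, and the application of Lemma \ref{lem:maximum} with first parameter $p^*$ are precisely that argument carried out in detail. All the bookkeeping (the conjugate $(p^*)^*=p$, the exponent $(p^*-1)/p^*=1/p$, and the three regimes of the lemma) checks out.
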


%%%%%%%%%%%%%%%%%%%%%%%%%%

\section{Proof of Theorem \ref{thm:hgf}}
\label{sec:hgf}

For $a,\ b \in \mathbb{R},\ c \neq 0,-1,-2,\cdots$,
a Gaussian hypergeometric function is defined as
\begin{equation}
\label{eq:hgf}
F(a,b;c;x):=\sum_{n=0}^\infty \frac{(a)_n(b)_n}{(c)_n}\frac{x^n}{n!},\quad |x|<1,
\end{equation}
where
$$(a)_n:=\frac{\Gamma(a+n)}{\Gamma(a)}=a(a+1)(a+2)\cdots(a+n-1),\ (a)_0:=1.$$

\begin{lem}
\label{lem:integral}
For $p,\ q \in (1,\infty)$ and $x \in [0,\pi_{p,q}/2]$, 
\begin{align}
\int_0^x \cos_{p,q}^p{x}\,dx
&=\frac{qx+p^*\sin_{p,q}{x}\cos_{p,q}^{p-1}{x}}{p^*+q}, \notag \\
\int_0^x \sin_{p,q}^q{x}\,dx
&=\frac{p^*x-p^*\sin_{p,q}{x}\cos_{p,q}^{p-1}{x}}{p^*+q}. \label{eq:sin}
\end{align}
\end{lem}

\begin{proof}
Set
$$I=\int_0^x \cos_{p,q}^p{x}\,dx, \quad
J=\int_0^x \sin_{p,q}^q{x}\,dx.$$
By \eqref{eq:pr}, it is easy to see that
\begin{equation}
\label{eq:IJ1}
I+J=x.
\end{equation}
Integrating $J$ by parts and using \eqref{eq:pep}, we obtain
$$J=\int_0^x \sin_{p,q}{x}\sin_{p,q}^{q-1}{x}\,dx
=\left[\sin_{p,q}{x} \left(-\frac{p^*}{q}\cos_{p,q}^{p-1}{x}\right)\right]_0^x
+\frac{p^*}{q}I;$$
thus,
\begin{equation}
\label{eq:IJ2}
J=-\frac{p^*}{q}\sin_{p,q}{x}\cos_{p,q}^{p-1}{x}+\frac{p^*}{q}I.
\end{equation}
From \eqref{eq:IJ1} and \eqref{eq:IJ2}, we obtain the assertion.
\end{proof}

\begin{cor}
Let $r \in (1,\infty)$. For $x \in [0,\pi_r/2]$,
\begin{align*}
\int_0^x \cos_r^r{x}\,dx
&=\frac{x}{r^*}+\frac{\sin_r{x}\cos_r^{r-1}{x}}{r},\\
\int_0^x \sin_r^r{x}\,dx
&=\frac{x}{r}-\frac{\sin_r{x}\cos_r^{r-1}{x}}{r};
\end{align*}
for $x \in [0,\pi_{r^*,r}/2]=[0,\pi_{2,r}/2^{2/r}]$,
\begin{align*}
\int_0^x \cos_{r^*,r}^r{x}\,dx
&=\frac{x}{2}+\frac{\sin_{2,r}{(2^{2/r}x)}}{2^{1+2/r}}\\
\int_0^x \sin_{r^*,r}^r{x}\,dx
&=\frac{x}{2}-\frac{\sin_{2,r}{(2^{2/r}x)}}{2^{1+2/r}}.
\end{align*}
\end{cor}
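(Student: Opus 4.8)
The plan is to obtain both families of identities as special cases of Lemma \ref{lem:integral}, the first by direct substitution and the second with one extra ingredient: a duplication formula for the generalized sine. For the interval $x\in[0,\pi_r/2]$ I would set $p=q=r$ in Lemma \ref{lem:integral}. Then $p^*=r^*$ and $p^*+q=r^*+r=r^2/(r-1)$, so the two coefficients collapse to $q/(p^*+q)=(r-1)/r=1/r^*$ and $p^*/(p^*+q)=1/r$. Inserting these into the formulas of Lemma \ref{lem:integral}, together with $\sin_{r,r}=\sin_r$ and $\cos_{r,r}=\cos_r$, reproduces exactly $\int_0^x\cos_r^r=x/r^*+(\sin_r x\,\cos_r^{r-1}x)/r$ and $\int_0^x\sin_r^r=x/r-(\sin_r x\,\cos_r^{r-1}x)/r$. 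This part is a pure simplification of the coefficients.

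For the interval $x\in[0,\pi_{r^*,r}/2]$ I would instead set $p=r^*$, $q=r$, so that $p^*=(r^*)^*=r$ and $p^*+q=2r$, both coefficients now equalling $1/2$. Lemma \ref{lem:integral} then gives $\int_0^x\cos_{r^*,r}^{r^*}=(x+\sin_{r^*,r}x\,\cos_{r^*,r}^{r^*-1}x)/2$ and $\int_0^x\sin_{r^*,r}^r=(x-\sin_{r^*,r}x\,\cos_{r^*,r}^{r^*-1}x)/2$ (the exponent of the cosine being $p=r^*$, as forced by \eqref{eq:pr} and by the requirement that the two integrands sum to $1$). To reach the stated right-hand sides it remains only to rewrite the product $\sin_{r^*,r}x\,\cos_{r^*,r}^{r^*-1}x$, for which I would invoke the duplication identity
$$\sin_{2,r}(2^{2/r}x)=2^{2/r}\sin_{r^*,r}x\,\cos_{r^*,r}^{r^*-1}x,\qquad x\in[0,\pi_{r^*,r}/2];$$
dividing by $2^{1+2/r}$ then converts the two displayed expressions into $x/2\pm\sin_{2,r}(2^{2/r}x)/2^{1+2/r}$. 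The interval equality $[0,\pi_{r^*,r}/2]=[0,\pi_{2,r}/2^{2/r}]$, which makes $2^{2/r}x$ sweep out $[0,\pi_{2,r}]$, I would check separately from $\pi_{p,q}=(2/q)B(1/q,1/p^*)$ in \eqref{eq:pipq} together with Legendre's duplication formula, which yield $B(1/r,1/r)=2^{1-2/r}B(1/r,1/2)$ and hence $\pi_{r^*,r}=2^{1-2/r}\pi_{2,r}$.

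The crux, and the step I expect to be the main obstacle, is the duplication identity itself. I would prove it by the uniqueness method: set $g(x):=2^{2/r}\sin_{r^*,r}x\,\cos_{r^*,r}^{r^*-1}x$, observe $g(0)=0$, and differentiate using $\cos_{r^*,r}=(\sin_{r^*,r})'$ and the relation $\cos_{r^*,r}^{r^*}+\sin_{r^*,r}^r=1$ from \eqref{eq:pr}. A short computation, in which the constants combine neatly because $(r^*-1)/r^*=1/r$ and $(r^*-1)r=r^*$, gives $g'=2^{2/r}(\cos_{r^*,r}^{r^*}-\sin_{r^*,r}^r)$. Squaring and using $\cos_{r^*,r}^{r^*}=1-\sin_{r^*,r}^r$ shows $(g')^2=2^{4/r}(1-g^r)$, so on the increasing branch $[0,\pi_{r^*,r}/4]$ the function $g$ solves the same separable initial value problem $g'=2^{2/r}(1-g^r)^{1/2}$, $g(0)=0$, as $x\mapsto\sin_{2,r}(2^{2/r}x)$ (whose derivative is $2^{2/r}\cos_{2,r}(2^{2/r}x)=2^{2/r}(1-\sin_{2,r}^r(2^{2/r}x))^{1/2}$ by \eqref{eq:pr}); uniqueness forces them to coincide there. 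Finally I would extend the identity to all of $[0,\pi_{r^*,r}/2]$ by reflection about the common maximum $x=\pi_{r^*,r}/4$: here $p^*=q=r$, so \eqref{eq:pi/4=} of Lemma \ref{lem:maximum} gives $\sin_{r^*,r}(\pi_{r^*,r}/4)=2^{-1/r}$, whence $g(\pi_{r^*,r}/4)=1$, matching the maximum of $\sin_{2,r}(2^{2/r}x)$, and both sides descend symmetrically thereafter. The delicate points are the correct choice of sign for the square root as $g$ passes its peak and the reflection argument on the decreasing branch.
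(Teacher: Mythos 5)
Your proposal is correct, and the substitution part coincides exactly with the paper's proof: the paper obtains the first pair from Lemma \ref{lem:integral} with $p=q=r$ (crediting Bushell--Edmunds \cite{BE2012}), and the second pair from $p^*=q=r$ combined with the multiple-angle formula $\sin_{2,r}(2^{2/r}x)=2^{2/r}\sin_{r^*,r}{x}\,\cos_{r^*,r}^{r^*-1}{x}$. The genuine difference is that the paper simply cites this formula from \cite[Theorem 1.1]{T2016b}, whereas you reprove it; your computation is sound --- $g'=2^{2/r}(\cos_{r^*,r}^{r^*}-\sin_{r^*,r}^{r})$, hence $(g')^2=2^{4/r}(1-g^r)$, with the anchor $g(\pi_{r^*,r}/4)=1$ supplied by \eqref{eq:pi/4=} --- so your argument is self-contained where the paper's is not. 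Two refinements: (a) on the decreasing branch, rather than a reflection (the symmetry of $g$ about $\pi_{r^*,r}/4$ is not obvious a priori, and the IVP $y'=-2^{2/r}(1-y^r)^{1/2}$, $y(\pi_{r^*,r}/4)=1$ is non-unique since $y\equiv 1$ also solves it), it is cleaner to note that both $g$ and $\sin_{2,r}(2^{2/r}\cdot)$ are strictly decreasing there and satisfy the separated relation $\int_y^1 (1-t^r)^{-1/2}\,dt=2^{2/r}\left(x-\pi_{r^*,r}/4\right)$, which determines them; (b) you are right that the cosine exponent in the second pair must be $r^*$, not $r$ as printed --- Lemma \ref{lem:integral} with $p=r^*$ yields $\cos_{r^*,r}^{r^*}$, and only then do the two integrands sum to $1$ via \eqref{eq:pr}. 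Your verification of $\pi_{r^*,r}/2=\pi_{2,r}/2^{2/r}$ through Legendre duplication is also correct.
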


\begin{proof}
The former half is Lemma \ref{lem:integral} for $p=q=r$
(this was proved by Bushell and Edmunds \cite[Proposition 2.6]{BE2012}).
For the latter half,  taking $p^*=q=r$ in Lemma \ref{lem:integral}
and using the multiple-angle 
formula \cite[Theorem 1.1]{T2016b}: for $x \in [0,\pi_{r^*,r}/2]=[0,\pi_{2,r}/2^{2/r}]$
$$\sin_{2,r}{(2^{2/r}x)}=2^{2/r}\sin_{r^*,r}{x}\cos_{r^*,r}^{r^*-1}{x},$$
we immediately conclude the assertion.
\end{proof}

We proceed to show Theorem \ref{thm:hgf}.

\begin{proof}[Proof of Theorem \ref{thm:hgf}]
Let $I,\ J$ be the integrals in the proof of Lemma \ref{lem:integral}.
The integral formula \cite[(14) in Theorem 3.1]{KT2019} gives: for $x \in (0,\pi_{p,q}/2)$
\begin{align*}
I&=\sin_{p,q}{x}\,F\left(\frac{1}{q},\frac{1}{p}-1;1+\frac{1}{q};\sin_{p,q}^q{x}\right),\\
J&=\frac{1}{q+1}\sin_{p,q}^{q+1}{x}\,F\left(1+\frac{1}{q},\frac{1}{p};2+\frac{1}{q};\sin_{p,q}^q{x}\right).
\end{align*}
Combining them with Lemma \ref{lem:integral}, we have 
\begin{align}
F\left(\frac{1}{q},\frac{1}{p}-1;1+\frac{1}{q};\sin_{p,q}^q{x}\right)
&=\frac{qx+p^*\sin_{p,q}{x}\cos_{p,q}^{p-1}{x}}{(p^*+q)\sin_{p,q}{x}}, \label{eq:1/q}\\
F\left(1+\frac{1}{q},\frac{1}{p};2+\frac{1}{q};\sin_{p,q}^q{x}\right)
&=\frac{p^*(q+1)(x-\sin_{p,q}{x}\cos_{p,q}^{p-1}{x})}{(p^*+q)\sin_{p,q}^{q+1}{x}},
\label{eq:1+1/q}
\end{align}
which imply the assertion.
In fact, \eqref{eq:1+1/q} is obtained also by differentiating both sides of \eqref{eq:1/q},
since $(d/dx)F(a,b;c;x)=(ab/c)F(a+1,b+1;c+1;x)$. 
\end{proof}

%%%%%%%%%%%%%%%%%%%%%%%%%%

\section{Proof of Theorem \ref{thm:lyapunov}}
\label{sec:lyapunov}

Let $p \in (1,\infty)$ and $a \in L^\infty(0,L)$.
Then, we consider \eqref{eq:bvp'}, i.e.,
the following homogeneous boundary value problem.
\begin{equation}
\label{eq:bvp}
\begin{cases}
-(\phi(u'))'=a(x)\phi(u), \quad 0<x<L,\\
u(0)=u(L)=0,
\end{cases}
\end{equation}
where $\phi(s):=|s|^{p-2}s$ for $s \neq 0$; $=0$ for $s=0$.
Recall 
$$\Lambda:=\{a \in L^\infty(0,L): \eqref{eq:bvp}\ \mbox{has a nontrivial
solution}\}.$$

\begin{proof}[Proof of Theorem \ref{thm:lyapunov}]
First of all, we will show the case $q=\infty$. 
Let $a \in \Lambda$ and $u$ be the nontrivial solution of \eqref{eq:bvp}.
Then we have
$$\int_0^L |u'|^p\,dx=\int_0^L a(x)|u|^p\,dx \leq \|a\|_\infty \int_0^L |u|^p\,dx.$$
Therefore,
\begin{equation}
\label{eq:ainfty}
\|a\|_\infty \geq \frac{\disp \int_0^L |u'|^p\,dx}{\disp \int_0^L |u|^p\,dx}
\geq \lambda_0:=(p-1)\left(\frac{\pi_p}{L}\right)^p,
\end{equation}
where $\lambda_0$ is the first eigenvalue of $p$-Laplacian (see e.g. \cite[Theorem A.4]{P2013}).

Then, the constant function 
$$a_\infty(x):=\lambda_0=(p-1)\left(\frac{\pi_p}{L}\right)^p$$
is an element of $\Lambda$ and attains the equalities of \eqref{eq:ainfty}. 
Indeed, \eqref{eq:bvp} for $a=a_\infty$ has 
the nontrivial solution $u=\sin_p{(\pi_p x/L)}$, the eigenfunction corresponding 
to $\lambda_0$. 

Next, we consider the case $q \in (1,\infty)$.
Let $X=W^{1,p}_0(0,L) \setminus \{0\}$, 
$a \in \Lambda$ and $u$ be the nontrivial solution of \eqref{eq:bvp}.
Then we have, by H\"{o}lder's inequality,
$$\int_0^L |u'|^p\,dx=\int_0^L a|u|^p\,dx \leq \|a\|_q 
\left(\int_0^L |u|^{pq^*}\,dx\right)^{1/q^*}.$$
Therefore, defining the functional $J_q: X \to \mathbb{R}$ as
$$J_q(v):=\frac{\disp \int_0^L |v'|^p\,dx}{\left(\disp 
\int_0^L |v|^{pq^*}\,dx\right)^{1/q^*}}$$
and its infimum
\begin{equation}
\label{eq:mqinf}
m_q:=\inf_{v \in X} J_q(v),
\end{equation}
we obtain
\begin{equation}
\label{eq:aq}
\|a\|_q \geq J_q(u) \geq m_q.
\end{equation}

It follows from a standard compactness argument and Lagrange's multiplier technique
(e.g. \cite[Theorem 2 in p.489]{E2010}) that $m_q$ is attained 
by the minimizer $u_q \in X$ satisfying
\begin{equation}
\label{eq:2.14'}
\begin{cases}
(\phi(u_q'))'+A_q(u_q)|u_q|^{pq^*-2}u_q=0,\\
u_q(0)=u_q(L)=0,
\end{cases}
\end{equation}
where
\begin{equation}
\label{eq:A_q}
A_q(u_q)=m_q \left(\int_0^L |u_q|^{pq^*}\,dx\right)^{-1/q}.
\end{equation}
In other words, $u_q$ satisfies
\begin{equation}
\label{eq:2.14}
\begin{cases}
(\phi(u_q'))'+a_q(x)\phi(u_q)=0,\\
u_q(0)=u_q(L)=0,
\end{cases}
\end{equation}
where 
\begin{equation}
\label{eq:a_q}
a_q(x):=A_q(u_q)|u_q(x)|^{p/(q-1)}.
\end{equation}

Then, the function $a_q$ is an element of $\Lambda$ and attains the equalities of \eqref{eq:aq}. 
Indeed, \eqref{eq:2.14} implies that \eqref{eq:bvp} for $a=a_q$ has 
the nontrivial solution $u_q$, and an easy calculation yields
$\|a_q\|_q=m_q$. 

Finally we will evaluate $m_q$ and give the expression of function $a_q$.
Since solution $u_q$ of \eqref{eq:2.14'} can be taken to be nonnegative, 
we can write 
\begin{gather}
A_q(u_q)
=\frac{pq^*}{p^*}\left(\frac{\pi_{p,pq^*}}{L}\right)^p R^{p-pq^*}, \label{eq:aaa}\\
u_q
=R\sin_{p,pq^*}{\left(\frac{\pi_{p,pq^*}}{L}x\right)} \label{eq:bbb}
\end{gather}
for some $R>0$ (cf. \cite{DM1999} and \cite[Theorem 2.1]{T2012}).
Substituting \eqref{eq:aaa} and \eqref{eq:bbb} into \eqref{eq:A_q},
we obtain
\begin{align*}
m_q
&=\frac{pq^*}{p^*}\left(\frac{\pi_{p,pq^*}}{L}\right)^p R^{p-pq^*}
\cdot R^{pq^*/q}
\left(\int_0^L\left|\sin_{p,pq^*}{\left(\frac{\pi_{p,pq^*}}{L}x\right)}
\right|^{pq^*}\,dx\right)^{1/q}\\
&=\frac{pq^*}{p^*}\left(\frac{\pi_{p,pq^*}}{L}\right)^p
\left(\frac{L}{\pi_{p,pq^*}}\right)^{1/q}
\left( 2 \int_0^{\pi_{p,pq^*}/2} \sin_{p,pq^*}^{pq^*}{t}\,dt\right)^{1/q}\\
&=\frac{pq^*\pi_{p,pq^*}^p}{L^{p-1/q}(p^*)^{1/q^*}(p^*+pq^*)^{1/q}}.
\end{align*}
Here, we used \eqref{eq:sin} for the integral calculation.  
Moreover, letting $t^{q^*}=\sin_p{x}$, we have
$$\pi_{p,pq^*}=2\int_0^1\frac{dt}{(1-t^{pq^*})^{1/p}}=\frac{2}{q^*}\int_0^{\pi_p/2}
\frac{dx}{\sin_p^{1/q}{x}}.$$
Thus, we conclude that
\begin{equation*}
m_q=\dfrac{2^p(p-1)(q-1)^{p-1+1/q}}{L^{p-1/q}q^{p-1}(pq-1)^{1/q}}
\left(\disp \int_0^{\pi_p/2}\frac{dx}{\sin_p^{1/q}{x}}\right)^p.
\end{equation*}
Function $a_q$ follows immediately from \eqref{eq:a_q} with \eqref{eq:aaa} and \eqref{eq:bbb}. 
\end{proof}

\begin{rem}
In a similar way to the proof of \cite[Lamma 2.9]{CMV2005}, 
it is possible to show that $\lim_{q \to 1+0}m_q=2^p/L^{p-1}$ 
and $\lim_{q \to \infty}m_q=(p-1)(\pi_p/L)^p$.
These constants are the best constants of $L^q$-Lyapunov inequalities 
\eqref{eq:ineqp_q=1} for $q=1$ and \eqref{eq:ineqp} for $q=\infty$, respectively.
\end{rem}

\begin{rem}
From \eqref{eq:mqinf}, we obtain 
the Sobolev-Poincar\'e inequality with best constant. Indeed, 
we obtain that $J_q(v) \geq m_q$, $q \in (1,\infty)$, 
for all $v \in X$. Letting $pq^*=r$, we see that for all $v \in W^{1,p}_0(0,L)$,
$$\|v\|_{r} \leq 
\frac{\left(1+\frac{p^*}{r}\right)^{1/p}}{\left(1+\frac{r}{p^*}\right)^{1/r}}
\frac{L^{1/p^*+1/r}}{\pi_{p,r}} \|v'\|_p, \quad r \in (p,\infty).$$
We emphasize that this result was already known
(see e.g. \cite[Theorem 5.1]{DM1999}, where the definition of $\pi_{p,r}$
in \cite{DM1999} is slightly different from \eqref{eq:pipq}). 
\end{rem}

%%%%%%%%%%%%%%%%%%%%%%%%%%

%\section*{Acknowledgments}

\end{document}